\documentclass[10pt,leqno]{article}

\usepackage{amsmath,amssymb,amsthm,mathrsfs,dsfont}

\usepackage[margin=3cm]{geometry} 

\usepackage{titlesec,hyperref}

\usepackage{color}

\usepackage{fancyhdr}
\usepackage{subfigure}
\usepackage[graphicx]{realboxes}
\usepackage{float}
\usepackage{enumerate}
\usepackage{enumitem}
\setlist[enumerate]{label=(\arabic*), leftmargin=*, itemsep=2pt}
\usepackage{dsfont} 
\usepackage{extarrows}

\titleformat{\subsection}{\it}{\thesubsection.\enspace}{1pt}{}

\newtheorem{theo}{Theorem}[section]
\newtheorem{lemm}[theo]{Lemma}
\newtheorem{defi}[theo]{Definition}
\newtheorem{coro}[theo]{Corollary}

\newtheorem{rema}[theo]{Remark}

\allowdisplaybreaks 

\numberwithin{equation}{section}

\begin{document}
\title{Invariant Measure of the Camassa–Holm Equation with Linear Multiplicative Noise
	\hspace{-4mm}
}

\author{
	Wei $\mbox{Luo}^1$ \footnote{E-mail:  luow23@mail.sysu.edu.cn}, \quad
	Zhaoyang $\mbox{Yin}^{1,2}$\footnote{E-mail: mcsyzy@mail.sysu.edu.cn} \quad and
	Pei $\mbox{Zheng}^{1}$\footnote{E-mail: zhengp25@mail2.sysu.edu.cn}\\
	$^1\mbox{Department}$ of Mathematics,
	Sun Yat-sen University, Guangzhou 510275, China\\
	$^2\mbox{School}$ of Science,\\ Shenzhen Campus of Sun Yat-sen University, Shenzhen 518107, China}

\date{}
\maketitle
\hrule

\begin{abstract}
	In this paper, we prove that the solution map of Camassa-Holm equation with linear multiplicative noise
	$$
		\left\{
		\begin{array}{l}
			{\rm d}u+(u\partial_xu+\partial_xP[u])\,{\rm d}t=\beta u\,{\rm d}W,\\
			u(0,x)=u_0(x),\\
			P[u]=(1-\partial_x^2)^{-1}\left(u^2+\frac 1 2(\partial_x u)^2\right)
		\end{array}
		\right.
	$$
	depends almost surely continuously on the deterministic initial data in $H^s$ for $s>3/2$. Furthermore, we prove the existence and non-uniqueness of an invariant measure for the Camassa-Holm equation with linear multiplicative noise.
	
	\vspace*{5pt}
					\noindent {\it 2020 Mathematics Subject Classification}: Primary, 37L40, 35B30; Secondary, 47D07, 60H15  
					
	\vspace*{5pt}
					\noindent{\it Keywords}: stochastic Camassa-Holm equation; Linear multiplicative noise; Continuous dependence on initial data; Invariant measure; Markovian Feller semigroup
	
\end{abstract}

\vspace*{10pt}

\tableofcontents

\section{Introduction}\label{1}
\quad In recent year, the Camassa-Holm (CH) equation obtained by Camassa and Holm the nonlinear partial differential equation \cite{CH}
$$
	m_{t}+um_{x}+2u_{x}m=0, m=u-u_{xx}+\kappa
$$
has been well studied and a series of achievements have been made. A particular feature of the CH equation is that when $\kappa=0$ 
$$
	u_t-u_{xxt}+3uu_x=2u_x\,u_{xx}+u\,u_{xxx}
$$
it admits peaked soliton solutions which are also called peakons. It can be regarded as a shallow water wave equation with nonhydrostatic pressure \cite{CH,CH2,CH3}
$$
	\left\{
	\begin{aligned}
		&u_t+uu_x+P_x=0,\\
		&P-P_{xx}=u^2+\frac 1 2u_x^2
	\end{aligned}
	\right.
$$

The bi-Hamiltonian structure of CH equation was studied in \cite{CH6,CH7}. Based on the bi-Hamiltonian structure, it ensures infinite conversation laws \cite {CH}, and the complete integrability was discussed in \cite{CH,CH4,CH5}. Furthermore, the CH equation exhibits both phenomena of (peaked) soliton interaction and wave breaking (the solution remains bounded but its slope becomes unbounded in finite time; cf. \cite{CH12}.)

Constantin and Escher \cite{CH13,CH14} investigated the Cauchy problem for the periodic Camassa–Holm equation. The wave breaking for Cauchy problem was studied in \cite{CH14,CH5,CH15}, and in \cite{CH16} Constantin claimed that wave breaking is the only way that singularities can occur in solutions. More precisely,the CH equation admits orbitally stable peaked solitons given explicitly by
$$u(t,x)=ce^{|x-ct|},\quad x\in\mathbb{R},\ c>0$$
as shown in \cite{CH17}. 

In recent year, the local well-posedness of Cauchy problem of the deterministic CH equation in Besov spaces and Sobolev spaces was proved in \cite{CH8,CH9,CH10,CH11}. Based on the local well-posedness in Besov and Sobolev spaces, Constantin and Molinet \cite{Constantin1999} proved the existence and uniqueness of global weak solutions satisfying energy conservation, both on the whole line and in the periodic case. They also established the stability of solitons. Using a viscous approximation, Xin and Zhang also proved the existence of CH equation in \cite{Xin2000}, and claimed the one-sided supernorm estimate and space-time integrability estimate in $L^p_{\rm loc}(\mathbb{R}^+\times\mathbb{R})$ for $p<3$ of the weak solutions, additionally, the large-time behavior of the weak solution is given. In the subsequent work \cite{Xin2002}, they established a "weak=strong" theorem for the admissible weak solutions and the uniqueness of weak solution under the condition that the initial data $m_0=(1-\partial_x^2)u_0$ is a positive Radon measure.

However, due to the uncertainties in geophysical and climate dynamics \cite{Arnold2001,Holm2015}, we have to consider some influence of internal, external, or environmental noises. Besides, the whole background for the considered physical system may be difficult to describe deterministically. Thus, we consider the randomness of the background movement which is one of the prevailing hypotheses on the onset of turbulence in fluid models, and there is a lot of recent work done on PDEs with random perturbations \cite{SPDE1,SPDE2,Debussche2011,SPDE3,SPDE4,SPDE5,SPDE6,SPDE7}.

The stochastic CH equation was derived via the stochastic variational method in \cite{Holm2015,Holm2016}. Consequently, the well-posedness, uniqueness and blow up phenomena for the stochastic Camassa-Holm type equations with perturbation is currently a interesting topic in the field of physics and mathematics. Chen and Gao established the existence of stochastic CH equation with additive noise in $H^m$ with $m>3/2$ in \cite{Chen2012}, and the existence of a pathwise solution to a modified CH equation with deterministic initial data and linear multiplicative noise in \cite{Chen2016}. In \cite{Tang2018}, Tang proved the local existence and pathwise uniqueness of pathwise solution in Sobolev space $H^m$ with $m>3/2$, and studied the condition lead to the global existence and the blow-up for the linear noise case. In particular, Tang studied the pathwise dissipative effect of the linear noise on the periodic peakons to the deterministic CH eqaution. The well-posedness for a generalized CH equation with higher order nonlinearities under random perturbation was studied by Miao, Rohde and Tang in \cite{Miao2024}. 

%

In this paper, we consider the initial-value problem for the stochastic CH equation with linear multiplicative noise and deterministic initial data:
\begin{equation}\label{CH u}
	\left\{
	\begin{array}{l}
		{\rm d}u+(u\partial_xu+\partial_xP[u])\,{\rm d}t=\beta u\,{\rm d}W,\\
		u(0,x)=u_0(x),\\
		P[u]=(1-\partial_x^2)^{-1}\left(u^2+\frac 1 2(\partial_x u)^2\right)
	\end{array}
	\right.
\end{equation}
for $(t,x)\in [0,T]\times\mathbb{S}$, where $\beta\in\mathbb{R}$ with $\beta\neq0$, $\mathbb{S}=\mathbb{S}/(2\pi\mathbb{Z})$ is the 1D torus, $T$ is a positive final time and $W$ is a 1D Wiener process defined on a standard filtered probability space $\mathcal{S}=(\Omega,\mathcal{F},\{\mathcal{F}_t\}_{t\in [0,T]},\mathbb{P})$, henceforth called a stochastic basis. In particular, we assume that the initial value $u_0$ is a deterministic function independent of any random quantities in the rest of this paper. Moreover, the elliptic equation for $P$ can be solved to supply
\begin{equation}\label{P convolution}
	P=P[u]:=K\ast\left(u^2+\frac 1 2(\partial_x u)^2\right),\quad K(x)=\frac{\cosh(x-2\pi\,{\rm int}(\frac{x}{2\pi})-\pi)}{2\sinh\,\pi},
\end{equation}
where $K$ is the Green's function of $1-\partial_x^2$ on $\mathbb{S}$, ${\rm int}(x)$ is the integer part of $x$, and $\ast$ means convolution in $x$.

During the last few decades, invariant measures and ergodicity for global solutions of equations with perturbations have been widely studied. 
The existence of an invariant measure for the Burgers equation perturbed by a space-time white noise was established in \cite{DaPrato1994}. 
Subsequently, Flandoli and coauthors proved the ergodicity for the stochastic incompressible Navier-Stokes equation with additive noise in bounded two-dimensional In recent years, invariant measures and ergodicity for global solutions of equations with perturbations have been widely studied. domains \cite{Flandoli1994,Flandoli1995}. 
In three dimensions, Da Prato and Debussche \cite{DaPrato2003} studied the ergodicity of the stochastic Navier-Stokes equation with additive noise. 
Cerrai \cite{Cerrai2003} obtained the existence of an invariant measure for a class of reaction-diffusion systems perturbed by multiplicative noise in bounded domains of dimension $d \ge 1$. 
Hairer and Mattingly \cite{Hairer2006} established the ergodicity of the stochastically forced Navier-Stokes equations on the two-dimensional torus with Brownian forcing. 
Brze\'zniak and Li \cite{Brzezniak2006} proved the existence of an invariant measure for the two-dimensional Navier-Stokes equations in a domain satisfying the Poincar\'e inequality, perturbed by an additive irregular noise. 
Dong and Xu \cite{Dong2007} investigated the existence of an invariant measure for the Burgers equation driven by a Poisson process and a combined Poisson-Wiener noise on the one-dimensional circle. 
More recently, \cite{Brzezniak2016} showed the existence of invariant measures for the stochastic extensible beam equation and the stochastic damped wave equation with polynomial nonlinearities. 
Brze\'zniak, Motyl, and Ondrej\'at \cite{Brzezniak2017} proved the existence of an invariant measure for the stochastic two-dimensional Navier-Stokes equations with multiplicative noise in unbounded domains.

Despite these important developments, the continuous dependence on initial data and the questions of existence and uniqueness of invariant measures for the Camassa–Holm equation with linear multiplicative noise remain largely unexplored. In this paper, we are mainly concerned with the continuous dependence on initial data for the CH equation with linear multiplicative noise, and then, based on this result, we study the existence and non-uniqueness of invariant measures for the stochastic CH equation. Our main result is the following theorem:
\begin{theo}\label{main}
	Let $s>3$ and $\mathcal{S}=(\Omega,\mathcal{F},\mathbb{P},\{\mathcal{F}_t\}_{t\ge0},W)$ be a fixed stochastic basis. Denote the Polish space of deterministic functions
	\begin{equation}\label{Es}
		E_s=\{u_0\in H^s|(1-\partial_x^2)u_0\ge0\ {\rm or}\ (1-\partial_x^2)u_0\le0,\quad \forall x\in\mathbb{S}\}
	\end{equation}
	endowed with the metric induced by the $H^s$-norm. Then equation \eqref{CH u} admits an invariant probability measure. Moreover, this invariant measure is not unique.
\end{theo}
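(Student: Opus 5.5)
The plan is to realise \eqref{CH u} as a Markov process on $E_s$, obtain invariant measures by Krylov--Bogoliubov, and get non-uniqueness by running that construction separately on two invariant sub-cones of $E_s$.

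First I would remove the noise. For $\beta\neq0$, set $\gamma(t)=e^{\beta W(t)-\frac{\beta^2}{2}t}$ and $u=\gamma v$. By It\^o's formula $v$ solves the random PDE
$$
\partial_t v+\gamma(t)\big(v\partial_x v+\partial_x P[v]\big)=0,\qquad v(0)=u_0 .
$$
This is a time-changed deterministic CH equation, since $P$ is quadratic. Write $m=(1-\partial_x^2)v$. Along the flow $\dot q=\gamma v(t,q)$, $q(0,x)=x$, the usual computation gives
$$
m(t,q(t,x))\,q_x(t,x)^2=m_0(x).
$$
Hence the sign of the momentum is preserved pathwise.

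Consequently the two closed cones
$$
E_s^+=\{u_0\in H^s:\ (1-\partial_x^2)u_0\ge0\},\qquad E_s^-=\{u_0\in H^s:\ (1-\partial_x^2)u_0\le0\}
$$
are each invariant under the flow. On them the Constantin--Escher criterion excludes wave breaking, because $|\partial_x v|\le|v|$ when $m$ has a sign. Combined with the $H^1$ conservation for $v$ and the $H^s$ blow-up alternative for $s>3$, this gives global solutions $u(t)=\gamma(t)v(t)$ in $E_s$ almost surely. I then define $P_t\varphi(u_0)=\mathbb{E}\,\varphi(u(t;u_0))$ for $\varphi\in C_b(E_s)$. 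The Markov property follows from pathwise uniqueness, and the Feller property follows from the almost sure continuous dependence on deterministic initial data in $H^s$ established earlier in the paper, together with dominated convergence.

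For existence I would apply Krylov--Bogoliubov to $P_t$ started from a fixed $u_0^+\in E_s^+\setminus\{0\}$, and separately from $u_0^-=-u_0^+\in E_s^-$. The time averages are
$$
\mu_T^{\pm}=\frac1T\int_0^T P_t^*\delta_{u_0^\pm}\,{\rm d}t .
$$
Tightness would come from a moment bound $\sup_t\mathbb{E}\|u(t)\|_{H^{s'}}\le C$ for some $s'>s$ with $u_0\in H^{s'}$. Such a bound is plausible because $\mathbb{E}\gamma(t)^p$ is explicit and the $H^{s'}$-norm of $v$ is controlled, via the sign condition, by $\|v\|_{W^{1,\infty}}\lesssim\|u_0\|_{H^1}$ and a Gronwall argument in the random time $\int_0^t\gamma$. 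Since $\{\|u\|_{H^{s'}}\le R\}\cap E_s^\pm$ is compact in $E_s$, limit points $\mu^\pm$ exist and are invariant by the Feller property. They satisfy $\mu^\pm(E_s^\pm)=1$ because the cones are closed and invariant. This proves existence.

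For non-uniqueness, the family $\lambda\mu^++(1-\lambda)\mu^-$ is invariant, so it suffices to show $\mu^+\neq\mu^-$. The main obstacle is that $E_s^+\cap E_s^-=\{0\}$ and $\gamma(t)\to0$ almost surely. The time averages may therefore concentrate at $0$, making $\delta_0$ the only limit.

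I would first test this by a functional that separates the two cones: the conserved-type quantity $\int_{\mathbb{S}}m\,{\rm d}x=\int_{\mathbb{S}}u\,{\rm d}x$. It has a sign on each cone and evolves as $\gamma(t)\int u_0$. If $\mu^+$ and $\mu^-$ turn out both to equal $\delta_0$, I would look for a different decomposition of $E_s$ into invariant closed pieces with disjoint closures. Candidates are level sets that are preserved for the $v$-equation and transported by $\gamma$, such as the cones intersected with the rays through nontrivial stationary profiles. On such pieces I would build Krylov--Bogoliubov measures whose supports are disjoint. This separation step is where I expect the real difficulty of the theorem to lie.
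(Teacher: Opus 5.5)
Your existence argument is sound in outline, but the non-uniqueness step is a genuine gap that cannot be closed. The obstruction you flagged is fatal, not technical. Because $P$ is quadratic, $\tilde u=\gamma^{-1}u$ is the deterministic CH flow run in the random clock $\int_0^t\gamma$. CH conserves the $H^1$ norm, so $\|u(t)\|_{H^1}=\gamma(t)\|u_0\|_{H^1}$ for all $t$, a.s. Now let $\mu$ be any invariant probability measure on $E_s$ and take $\varphi(u)=\min(1,\|u\|_{H^1})\in C_b(E_s)$. Invariance gives, for every $t\ge0$,
\[
\int_{E_s}\varphi\,{\rm d}\mu=\int_{E_s}\mathbb{E}\min\bigl(1,\gamma(t)\|u_0\|_{H^1}\bigr)\,\mu({\rm d}u_0).
\]
The right-hand side tends to $0$ as $t\to\infty$ by dominated convergence, since $\gamma(t)\to0$ a.s. Hence $\mu=\delta_0$, which is invariant because $u\equiv0$ solves the equation. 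So the invariant measure on $E_s$ is unique, and no decomposition into invariant closed pieces can produce a second one. On the ray through a constant $c$, for instance, $u(t)=\gamma(t)c$ is a driftless geometric Brownian motion, whose only invariant law is $\delta_0$. Moreover, $\int_0^\infty\gamma<\infty$ a.s., so $\tilde u(t)$ converges in $H^s$ while $\gamma(t)\to0$. Thus $u(t)\to0$ in $H^s$ a.s., and your $\mu^+$ and $\mu^-$ are both $\delta_0$.

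The paper's proof fails at the same place. It deduces $\mu^u(K)=1$ from $P_t(u_0,K)=1$ for all $t$, with $K=\{(1-\partial_x^2)u>0\}$. But $K$ is not closed, so weak limits need not charge it, and indeed $\delta_0(K)=0$. The \emph{Moreover} clause of the statement is therefore false as written; only the existence part survives, and it is trivial via $\delta_0$. A secondary point: your moment bound $\sup_t\mathbb{E}\|u(t)\|_{H^{s'}}\le C$ cannot come from the Gronwall estimate. By Girsanov and Jensen, $\mathbb{E}\bigl[\gamma(t)\exp\bigl(c\int_0^t\gamma\bigr)\bigr]\ge\exp\bigl(c\int_0^te^{\beta^2r}\,{\rm d}r\bigr)$. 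Tightness has to be argued pathwise from $\sup_t\gamma<\infty$ and $\int_0^\infty\gamma<\infty$ a.s., which is what the paper does with $A(\omega)$ and $T(\omega)$.
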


\section{Notations and Preliminaries}\label{2}

\quad In this paper, $\mathcal{S}=(\Omega,\mathcal{F},\mathbb{P},\{\mathcal{F}_t\}_{t\ge 0},W)$ is a stochastic basis, where $\mathbb{P}$ is a probability measure on $\Omega$, $\mathcal{F}$ is a $\sigma$-algebra, $\{\mathcal{F}_t\}_{t\ge 0}$ is a right-continuous filtration on $(\Omega,\mathcal{F})$ such that $\{\mathcal{F}_0\}$ contains all the $\mathbb{P}$-negligible subsets, and $W(t)=W(\omega,t)$, $\omega\in\Omega$ is a standard 1-D Brownian motion defined on $(\Omega,\mathcal{F},\mathbb{P},\{\mathcal{F}_t\}_{t\ge 0})$.

\begin{defi}[pathwise solution]
	Let $\mathcal{S}=(\Omega,\mathcal{F},\mathbb{P},\{\mathcal{F}_t\}_{t\ge 0},W)$ be a fixed stochastic basis. Let $s>3/2$ and $u_0$ be an $H^s$-valued $\mathcal{F}_0$ measurable random variable relative to $\mathcal{S}$. A local pathwise $H^s$ solution to \eqref{CH u} is a pair $(u,\tau)$, where $\tau$ is a stopping time satisfying $\mathbb{P}(\tau>0)=1$ and $u:\Omega\times[0,\tau]\rightarrow H^s$ is an $\mathcal{F}_t$ predictable $H^s$-valued process satisfying:
	\begin{enumerate}
		\item $\mu_0(Y)=\mathbb{P}(u(0)\in Y)$ for all $Y\in\mathcal{B}(H^s)$ and $u(\cdot\wedge\tau)\in L^2(\Omega;C([0,\infty);H^s))$ and $u(\cdot\wedge\tau)\in C([0,\infty);H^s)$, $\mathbb{P}$-a.s.
		\item For every $t>0$ and $v\in C^\infty(\mathbb{S})$,
		$$(u(t\wedge\tau),v)_{L^2}-(u(0),v)_{L^2}+\int_0^{t\wedge\tau}(u\partial_xu+\partial_xP[u],v)_{L^2}\,{\rm d}s=\int_0^{t\wedge\tau}\beta (u,v)_{L^2}\,{\rm d}W$$
		almost surely.
	\end{enumerate}
	
	If given any two pairs of local pathwise solutions $(u_!,\tau_1)$ and $(u_2,\tau_2)$ with $\mathbb{P}(u_1(0)=u_2(0))=1$ satisfy
	$$\mathbb{P}(u_1(t,x)=u_2(t,x),\forall (t,x)\in[0,\tau_1\wedge\tau_2]\times\mathbb{S})=1,$$
	then the local pathwise solutions are said to be pathwise unique.
\end{defi}

\begin{defi}[maxial and global solutions]
	Let $s>3/2$. A maximal $H^s$ solution to \eqref{CH u} is a triple $(u,\{\tau_n\}_{n\in\mathbb{N}},\xi)$ such that for each $n\in\mathbb{N}$, $(u,\tau_n)$ is a pathwise $H^s$ solution, $\tau_{n+1}\ge \tau_n$, $\lim_{n\rightarrow\infty}\tau_n=\xi$, and
	$$\sup_{t\in[0,\tau_n]}\|u\|_{H^s}\ge n,\ \mathbb{P}-\text{a.s., on the set }\{\xi<\infty\}.$$
	If $\xi=\infty$, $\mathbb{P}$-a.s., then we say that the pathwise solution exists globally.
\end{defi}

Now we briefly recall some relevant mathematical preliminaries from functional analysis and probability theory. 

\begin{lemm}\cite{BCD2011,Tang2018}\label{P estimate}
	For any $u,v\in H^s$ with $s>1/2$, we have
	$$\|P_x[v]\|_{H^s}\lesssim(\|v\|_{L^\infty}+\|\partial_x v\|_{L^\infty})\|v\|_{H^s},\quad s>3/2,$$
	$$\|P_x[u]-P_x[v]\|_{H^s}\lesssim(\|u\|_{H^s}+\|v\|_{H^s})\|u-v\|_{H^s},\quad s>3/2,$$
	$$\|P_x[u]-P_x[v]\|_{H^s}\lesssim(\|u\|_{H^{s+1}}+\|v\|_{H^{s+1}})\|u-v\|_{H^s},\quad 1/2<s<3/2.$$
\end{lemm}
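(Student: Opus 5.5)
The plan is to reduce all three bounds to product estimates in Sobolev spaces, using the fact that $P_x[v]=\partial_x(1-\partial_x^2)^{-1}\big(v^2+\tfrac12 v_x^2\big)$. On $\mathbb{S}$ the operator $\partial_x(1-\partial_x^2)^{-1}$ is the Fourier multiplier $ik/(1+k^2)$. This symbol is bounded by $(1+k^2)^{-1/2}$, so the operator maps $H^{r-1}$ boundedly into $H^{r}$ for every $r\in\mathbb{R}$. Hence in each case it suffices to bound $v^2+\frac12 v_x^2$ (or the corresponding difference) in $H^{s-1}$.

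\emph{First estimate ($s>3/2$).} I would use the Moser-type inequality $\|fg\|_{H^r}\lesssim \|f\|_{L^\infty}\|g\|_{H^r}+\|g\|_{L^\infty}\|f\|_{H^r}$, valid for $r\ge0$ (see \cite{BCD2011}). For the $v^2$ term I bound $\|v^2\|_{H^{s-1}}\le\|v^2\|_{H^{s}}\lesssim\|v\|_{L^\infty}\|v\|_{H^s}$. For the other term I take $r=s-1>1/2$ to get $\|v_x^2\|_{H^{s-1}}\lesssim\|v_x\|_{L^\infty}\|v_x\|_{H^{s-1}}\le\|\partial_xv\|_{L^\infty}\|v\|_{H^s}$. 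Adding the two gives the first bound.

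\emph{Second estimate ($s>3/2$).} I factor the differences as $u^2-v^2=(u-v)(u+v)$ and $u_x^2-v_x^2=\partial_x(u-v)\,\partial_x(u+v)$. Since $s-1>1/2$, $H^{s-1}$ is an algebra, so
\[
\|\partial_x(u-v)\,\partial_x(u+v)\|_{H^{s-1}}\lesssim\|u-v\|_{H^s}\|u+v\|_{H^s}.
\]
The term $(u-v)(u+v)$ is handled the same way in $H^s\subset H^{s-1}$.

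\emph{Third estimate ($1/2<s<3/2$).} Here $r:=s-1\in(-1/2,1/2)$ may be negative, so the algebra property is lost. I would instead use the product estimate
\[
\|fg\|_{H^{r}}\lesssim\|f\|_{H^{\sigma}}\|g\|_{H^{r}},\qquad \sigma>1/2,\ -\sigma<r\le\sigma .
\]
For $r\ge0$ this is the standard estimate. For $r<0$ it follows by duality from the case $-r\in(0,\sigma)$, since multiplication by $f\in H^\sigma$ is bounded on $H^{-r}$. Alternatively one can prove it directly with Bony's paraproduct decomposition from \cite{BCD2011}.

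I then apply this with $f=\partial_x(u+v)\in H^{s}$ and $\sigma=s>1/2$, and $g=\partial_x(u-v)\in H^{s-1}$. The condition $-\sigma<r$ reads $-s<s-1$, which holds because $s>1/2$. This gives $\lesssim\|u+v\|_{H^{s+1}}\|u-v\|_{H^s}$. The term $(u-v)(u+v)$ is treated the same way with $f=u+v\in H^{s+1}$ and $g=u-v$, using $\|g\|_{H^{s-1}}\le\|u-v\|_{H^s}$.

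The main obstacle is justifying the product estimate at negative regularity in the third case. The rest is multiplier bookkeeping. I would cite it from \cite{BCD2011}, or prove it by duality as indicated, checking that the endpoint restriction $s>1/2$ is exactly what makes the index condition $-\sigma<r$ hold.
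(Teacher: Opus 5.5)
Your proposal is correct. The bound $|k|/(1+k^2)\le(1+k^2)^{-1/2}$ reduces all three estimates to $H^{s-1}$ bounds on $v^2+\frac12 v_x^2$ (or its difference), which you handle by the Moser inequality, the algebra property of $H^{s-1}$ for $s>3/2$, and, for $1/2<s<3/2$, the product estimate $\|fg\|_{H^{s-1}}\lesssim\|f\|_{H^{s}}\|g\|_{H^{s-1}}$. For $s-1\ge 0$ that product estimate follows by interpolating multiplication by $f$ between $L^2$ and $H^s$, and for $s-1<0$ by duality, which is exactly where $s>1/2$ is used. The paper gives no proof of its own and simply cites \cite{BCD2011,Tang2018}, which rely on these same ingredients, so your argument is essentially the standard one.
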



To construct the invariant measure of \eqref{CH u}, we first introduce the following notations and a key theorem.

\begin{defi}
	Let $E$ be a Polish space, denote $C_b(E)$ is the set of all real continuous and bounded Borel on $E$, $\mathcal{M}_1(E)$ is the set of all probability measures defined on $(E,\mathcal{B}(E))$. If for all $t\ge 0$, $x\in E$ and $\Gamma\in \mathcal{B}(E)$, we have:
	\begin{enumerate}
		\item $P_t(x,\cdot)$ is a probability measure on $(E,\mathcal{B}(E))$.
		\item  $P_t(\cdot,\Gamma)$ is an $\mathcal{B}(E)$-measurable function.
		\item $P_{t+s}(x,\Gamma)=\int_E P_s(y,\Gamma)P_t(x,{\rm d}y)$.
		\item $P_0(x,\Gamma)=\chi_\Gamma(x)$.
	\end{enumerate}
	Then $P_t$ is a Markovian transition.
	
	Any transition function $P_t(x,\Gamma)$ defines a semigroup of linear operators $P_t$, $t\ge 0$ on the space $C_b(E)$ by the formula
	$$P_t\varphi(x)=\int_E \varphi(y)P_t(x,{\rm d}y),\quad t\ge 0,\ x\in E,\ \varphi\in C_b(E),$$
	$P_t$ is called the Markovian transition semigroup associated to the transition function $P_t(x,\Gamma)$.
	
	In particular, a Markovian semigroup $P_t$ is said to be stochastically continuous if 
	$$\lim_{t\rightarrow0} P_t(x,B(x,\delta))=1,\quad\text{for all }x\in E,\ \delta>0.$$
\end{defi}

\begin{defi}[Markovian Feller semigroup]
	A stochastically continuous Markovian semigroup $P_t$, $t\ge 0$, is called a Markovian Feller semigroup, if for any $\varphi\in C_b(E)$ and $t\ge 0$ one has $P_t\varphi\in C_b(E)$.
\end{defi}

\begin{theo}[Krylov-Bogoliubov Theorem]\label{KB theorem}\cite{DaPrato1996,Krylov1973}
	If $P_t$, $t\ge0$ is a stochastically continuous Markovian semigroup, then for every $x\in E$ and $T>0$ the formula
	$$\mu_T(x,\Gamma)=\frac 1 T\int_0^T P_t(x,\Gamma)\,{\rm d}t,\quad\Gamma\in\mathcal{B}(E),$$
	defines a probability measure. For any $\nu\in\mathcal{M}_1(E)$, $R^\ast_T\nu$ is defined by
	$$R^\ast_T\nu(\Gamma)=\int_ER_T(x,\Gamma)\,\nu({\rm d}x),\quad\Gamma\in\mathcal{B}(E).$$
	
	If for some $\nu\in\mathcal{M}_1(E)$ and some sequence $T_n\uparrow\infty$ the sequence $\{R^\ast_{T_n}\nu\}$ is tight, then there exists an invariant measure for $P_t$.
\end{theo}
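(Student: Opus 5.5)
The statement that immediately precedes is the Krylov–Bogoliubov Theorem \ref{KB theorem}, a classical result quoted from \cite{DaPrato1996,Krylov1973}. I would nonetheless prove it by the standard averaging argument, reading $R_T$ as the time-averaged kernel $\mu_T$ defined in the statement. There are three steps: show that $R_T(x,\cdot)$ is a probability measure, extract a weak limit from the tight family, and verify that this limit is invariant.

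\emph{Step 1: $R_T(x,\cdot)$ is a probability measure.} The map $t\mapsto P_t(x,\Gamma)$ must be Borel measurable for the integral to make sense. Stochastic continuity together with the semigroup property gives continuity of $t\mapsto P_t\varphi(x)$ for uniformly continuous bounded $\varphi$: one writes $P_{t+h}\varphi(x)-P_t\varphi(x)=\int (P_h\varphi(y)-\varphi(y))P_t(x,{\rm d}y)$ and uses $P_h(y,B(y,\delta))\to1$ with dominated convergence. A monotone class argument then extends measurability to all indicators $\chi_\Gamma$. Countable additivity of $R_T(x,\cdot)$ follows from monotone convergence, and $R_T(x,E)=1$. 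Measurability of $x\mapsto R_T(x,\Gamma)$ follows from joint measurability of $(t,x)\mapsto P_t(x,\Gamma)$, so $R_T^\ast\nu$ is well defined.

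\emph{Step 2: a weak limit.} By Prokhorov's theorem on the Polish space $E$, tightness of $\{R^\ast_{T_n}\nu\}$ yields a subsequence, still denoted $T_n$, and a measure $\mu\in\mathcal{M}_1(E)$ with $R^\ast_{T_n}\nu\rightharpoonup\mu$ weakly.

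\emph{Step 3: invariance, the main obstacle.} I need $\int P_t\varphi\,{\rm d}\mu=\int\varphi\,{\rm d}\mu$ for $\varphi\in C_b(E)$ and $t>0$. Fubini and the semigroup property give
\[
\int P_t\varphi\,{\rm d}R^\ast_{T}\nu-\int\varphi\,{\rm d}R^\ast_{T}\nu=\frac1{T}\int_E\Big(\int_T^{T+t}P_s\varphi\,{\rm d}s-\int_0^tP_s\varphi\,{\rm d}s\Big)\,{\rm d}\nu,
\]
whose absolute value is at most $2t\|\varphi\|_\infty/T_n\to0$ along $T=T_n$. To pass to the limit on the left-hand side I need $P_t\varphi\in C_b(E)$, i.e. the Feller property. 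This is the delicate point: stochastic continuity alone does not suffice for this step, so the theorem should be read, as in \cite{DaPrato1996}, for Markovian Feller semigroups. Granting the Feller property, both integrals converge to their $\mu$-counterparts, so $\int P_t\varphi\,{\rm d}\mu=\int\varphi\,{\rm d}\mu$ for all $\varphi\in C_b(E)$. Since bounded continuous functions determine Borel measures on a metric space, $P_t^\ast\mu=\mu$, and $\mu$ is invariant.

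For the paper's application to Theorem \ref{main}, this means the real work lies in proving the Feller property on $E_s$ (via continuous dependence on initial data) and tightness (via uniform-in-time bounds), rather than in the abstract argument above.
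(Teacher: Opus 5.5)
The paper gives no proof of this theorem (it is simply quoted from \cite{DaPrato1996}), and your averaging argument is the standard proof from that source. You are also right that the Feller hypothesis is indispensable. The paper's statement omits it, though the paper does verify it for its semigroup on $E_s$. For example, $P_t(x,\cdot)=\delta_{\phi_t(x)}$ with $\phi_t(x)=1-(1-x)e^{-t}$ on the circle $[0,1)$, $0\sim 1$, is stochastically continuous on a compact space. Yet for each $c<1$ one has $\phi_t^{-1}([0,c])=\emptyset$ for large $t$, so no invariant measure exists.

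One small correction: the increment identity in your Step 1 gives only right-continuity of $t\mapsto P_t\varphi(x)$, not continuity. For left increments the measure $P_{t-h}(x,\cdot)$ moves with $h$, and left-continuity can genuinely fail. Right-continuity is still all you need, both for measurability in $t$ and for the joint measurability in $(t,x)$ that you invoke.
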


According to the main results from \cite{Tang2018}, the existence and uniqueness of global pathwise solutions to \eqref{CH u} was obtained:
\begin{theo}\label{global}
	Let $s>3$ and $\mathcal{S}=(\Omega,\mathcal{F},\mathbb{P},\{\mathcal{F}_t\}_{t\ge0},W)$ be a fixed stochastic basis. Assume that $u_0\in H^s$ is a deterministic function and $(1-\partial_x)^2u_0(x)>0$ or $(1-\partial_x)^2u_0(x)<0$ for all $x\in\mathbb{S}$, then for a.e. $\omega\in\Omega$, there exists a unique global pathwise solution $u$ to \eqref{CH u}, i.e. $\mathbb{P}(u\text{ exists globally})=1$. 
	Moreover, denote $\tilde{u}=\eta^{-1}(\omega,t)u$ with $\eta(\omega,t)=e^{\beta W(t)-\frac{\beta^2} 2 t}$,
	\begin{equation}\label{u Girsanov H^1}
		\mathbb{P}(\|\tilde{u}(t)\|_{H^1}=\|\tilde{u}_0\|_{H^1}=\|u_0\|_{H^1},\ \forall t>0)=1,
	\end{equation}
	\begin{equation}\label{u H^1}
		\mathbb{P}(\|u(t)\|_{H^1}\le \eta(\omega,t)\|u_0\|_{H^1},\ \forall t>0)=1,
	\end{equation}
	\begin{equation}\label{u W^1,infty}
		\mathbb{P}(\|u(t)\|_{W^{1,\infty}}\le 2\eta(\omega,t)\,\|u_0\|_{H^1},\ \forall t>0)=1.
	\end{equation}
	In particular, for all $T\ge 0$,
	\begin{equation}\label{u H^s}
		\mathbb{P}(\|u(t)\|_{H^s}\le C_s\eta(\omega,t)\|u_0\|_{H^s}\exp(C_s\int_0^T\eta(\omega,t)\,\|u_0\|_{H^1}\,{\rm d}t),\ \forall 0<t\le T)=1
	\end{equation}
	In particular, ${\rm sign}(u)={\rm sign}((1-\partial_x^2)u)={\rm sign}\left((1-\partial_x^2)u_0\right).$
\end{theo}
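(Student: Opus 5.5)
The plan is to remove the noise by the exponential transform $\tilde u=\eta^{-1}(\omega,t)u$, which turns \eqref{CH u} into a pathwise (random-coefficient) deterministic Camassa--Holm equation. Every assertion of Theorem \ref{global} should then follow from the classical deterministic theory applied $\omega$ by $\omega$.

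\emph{Step 1: the transformed equation.} Since ${\rm d}\eta=\beta\eta\,{\rm d}W$, Itô's formula gives
$${\rm d}(\eta^{-1})=-\beta\eta^{-1}{\rm d}W+\beta^2\eta^{-1}{\rm d}t.$$
Apply the Itô product rule to $\eta^{-1}u$; because $\eta$ is a scalar, it can be used mode by mode in $H^{s-1}$. The $\beta$-terms in ${\rm d}W$ cancel. The $\beta^2$ drift also cancels against the cross-variation $-\beta^2\eta^{-1}u\,{\rm d}t$. Using the quadratic homogeneity $P[\eta \tilde u]=\eta^2P[\tilde u]$, this leaves
$$\partial_t\tilde u+\eta(\omega,t)\big(\tilde u\partial_x\tilde u+\partial_xP[\tilde u]\big)=0,\qquad \tilde u(0)=u_0 .$$
Next introduce the random time change $\sigma(t)=\int_0^t\eta(\omega,r)\,{\rm d}r$, which is strictly increasing, continuous and adapted. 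Then $\tilde u(t)=U(\sigma(t))$, where $U$ solves the deterministic periodic CH equation with data $u_0$.

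\emph{Step 2: global existence and the conserved and sign properties.} Let $m_0=(1-\partial_x^2)u_0$ have a strict sign. The Constantin--Escher theory (\cite{CH13,CH14}) then shows that $U$ exists globally in $H^s$. Along the characteristics $q$ one has $m(\sigma,q(\sigma,x))\,q_x^2=m_0(x)$, so the sign of $m$ is preserved. Since $u=K\ast m$ with $K>0$, the sign of $u$ equals that of $m$. Conservation of $\int(U^2+U_x^2)$ gives \eqref{u Girsanov H^1}, and multiplying by $\eta$ gives \eqref{u H^1}. For \eqref{u W^1,infty}, use $|K'|\le K$ together with the fixed sign of $m$: this yields $|\tilde u_x|=|K'\ast m|\le K\ast|m|=|\tilde u|$. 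Combined with $\|\tilde u\|_{L^\infty}\le\|\tilde u\|_{H^1}$ on $\mathbb{S}$, this gives the factor $2$.

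\emph{Step 3: the $H^s$ bound and uniqueness.} Apply $\Lambda^s=(1-\partial_x^2)^{s/2}$ to the transformed equation. The Kato--Ponce commutator estimate and Lemma \ref{P estimate} give
$$\frac{{\rm d}}{{\rm d}t}\|\tilde u\|_{H^s}^2\le C_s\,\eta\,\|\tilde u\|_{W^{1,\infty}}\|\tilde u\|_{H^s}^2 .$$
Inserting Step 2 and applying Gronwall yields \eqref{u H^s}. Pathwise uniqueness follows by running the same transform on the difference of two solutions: the difference satisfies a pathwise $L^2$ (or $H^{s-1}$) estimate, and Gronwall closes it up to the minimum of the two stopping times. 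Finally, predictability and adaptedness of $u=\eta U(\sigma)$ follow because $\eta$ and $\sigma$ are adapted and continuous, and $U$ is a deterministic continuous map into $H^s$.

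\emph{Main obstacle.} I expect the delicate point to be the rigorous justification of Step 1, that is, showing that the pathwise solution of \eqref{CH u} coincides with $\eta U(\sigma)$. The Itô product rule must be applied to an $H^s$-valued process whose drift lives only in $H^{s-1}$. I would handle this with a mollifier $J_\varepsilon$ (or Galerkin truncation), apply Itô to $\eta^{-1}J_\varepsilon u$, and pass $\varepsilon\to0$ in $H^{s-1}$, using the local pathwise theory of \cite{Tang2018}. The converse direction is then a direct computation on $\eta U(\sigma)$, and uniqueness identifies the two solutions.
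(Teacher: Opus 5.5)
Your proposal is correct. It is essentially the argument the paper relies on. The paper gives no proof of this theorem and imports it from \cite{Tang2018}. That argument, like the paper's own Section 3, rests on your Step 1: the Girsanov-type transform $\tilde u=\eta^{-1}u$ turns \eqref{CH u} into the random-coefficient equation $\partial_t\tilde u+\eta(\tilde u\partial_x\tilde u+\partial_xP[\tilde u])=0$, and Constantin--Escher type arguments are then applied $\omega$ by $\omega$.

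Your additional time change $\sigma(t)=\int_0^t\eta(\omega,r)\,{\rm d}r$, giving $\tilde u(t)=U(\sigma(t))$, is a clean way to make that last step literal. The following then come straight from the deterministic theory, with no need to redo characteristics or energy estimates with a random, time-dependent coefficient:
\begin{itemize}
\item global existence, $H^1$ conservation, sign preservation of $m$, and the pointwise bound $|\partial_x\tilde u|\le|\tilde u|$;
\item the $H^s$ bound, obtained by substituting $\sigma=\sigma(t)$ into the deterministic estimate $\|U(\sigma)\|_{H^s}\le\|u_0\|_{H^s}\exp(C_s\|u_0\|_{H^1}\sigma)$;
\item uniqueness, since any local solution $(v,\tau)$ yields $V(r):=\eta^{-1}(\sigma^{-1}(r))\,v(\sigma^{-1}(r))$, a solution of the deterministic CH equation on $[0,\sigma(\tau)]$ with data $u_0$.
\end{itemize}

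The step you flag as the main obstacle is lighter than you expect. The solution concept is weak, so it suffices to apply the scalar It\^o product rule to $\eta^{-1}(t\wedge\tau)\,(u(t\wedge\tau),v)_{L^2}$ for each $v\in C^\infty(\mathbb{S})$. Continuity of $u$ in $H^s$ then upgrades the resulting identity to $\tilde u\in C^1([0,\tau);H^{s-1})$, so no mollification is needed.
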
 

\section{Continuity with respect to the initial data}
\begin{theo}
	Let $s>3/2$ and $\mathcal{S}=(\Omega,\mathcal{F},\mathbb{P},\{\mathcal{F}_t\}_{t\ge0},W)$ be a fixed stochastic basis. Suppose $\{u_0^k(x)\}_{k\in\mathbb{N}}$ be a sequence of $H^s$-valued deterministic functions such that $u_0^k\rightarrow u_0^\infty$ in $H^s$. For each $k\in\mathbb{N}\cup\{\infty\}$, let $(u^k,\{\tau_n^{k}\}_{n\in\mathbb{N}},\xi^k)$ be the unique maximal solution to \eqref{CH u} with initial data $u_0^k$. Then, for a.e. $\omega\in\Omega$, any $T>0$ there exists $\tilde{\xi}>0$ such that for all $0\le t<\tilde{\xi}\wedge T$,
	\begin{equation}\label{continuity H^s}
		\lim_{n\rightarrow\infty}\sup_{0\le s\le T}\|u^k(\omega,s)-u^\infty(\omega,s)\|_{H^s}=0.
	\end{equation}
	In other words, the solution map depends continuously on the initial data in $H^s$ almost surely.
\end{theo}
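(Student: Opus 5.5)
The plan is to remove the noise by the change of variables already used in Theorem \ref{global}. Set $\eta(\omega,t)=e^{\beta W(t)-\frac{\beta^2}{2}t}$ and $v^k=\eta^{-1}u^k$. By It\^o's formula (the noise is linear), each $v^k$ solves, pathwise for a.e. $\omega$, the random but deterministic-in-structure PDE
\begin{equation*}
\partial_t v^k+\eta(\omega,t)\bigl(v^k\partial_x v^k+\partial_x P[v^k]\bigr)=0,\qquad v^k(0)=u_0^k .
\end{equation*}
This holds because $P$ is quadratic, so $P[\eta v]=\eta^2P[v]$. We fix $\omega$ outside a null set on which $t\mapsto W(t)$ is continuous. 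Then $\eta(\omega,\cdot)$ is continuous and positive on $[0,T]$, with $0<\eta_*\le\eta\le\eta^*<\infty$ there. Continuity of $u_0\mapsto u$ in $C([0,T];H^s)$ now reduces to continuity of $u_0\mapsto v$, since $\|u^k-u^\infty\|_{H^s}\le\eta^*\|v^k-v^\infty\|_{H^s}$. The problem is thus a deterministic CH equation with a bounded continuous time coefficient.

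For the existence time, let $R=2\sup_k\|u_0^k\|_{H^s}<\infty$. The standard $H^s$ energy estimate uses the Kato--Ponce commutator estimate for $[J^s,v]\partial_x v$, Lemma \ref{P estimate}, and $H^s\hookrightarrow W^{1,\infty}$ for $s>3/2$. It gives
\begin{equation*}
\frac{\rm d}{{\rm d}t}\|v^k\|_{H^s}^2\le C\eta^*\|v^k\|_{W^{1,\infty}}\|v^k\|_{H^s}^2 .
\end{equation*}
This holds after regularizing with Friedrichs mollifiers $J_\varepsilon$ to justify it. Hence there is $\tilde\xi=\tilde\xi(\omega,R,T)>0$ such that $\sup_{t\le\tilde\xi\wedge T}\|v^k(t)\|_{H^s}\le R$ uniformly in $k\in\mathbb{N}\cup\{\infty\}$. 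In particular $\tilde\xi\wedge T\le\xi^k$ for all $k$, and this is the $\tilde\xi$ in the statement.

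Next comes a Bona--Smith argument, which is the main obstacle. A direct $H^s$ estimate of $w=v^k-v^\infty$ loses a derivative through the term $w\partial_x v^\infty$. Only an $H^{s-1}$ Lipschitz bound is directly available, using the third estimate of Lemma \ref{P estimate} when $s-1<3/2$ and the second otherwise:
\begin{equation*}
\|v^k-v^\infty\|_{H^{s-1}}(t)\le C\|u_0^k-u_0^\infty\|_{H^{s-1}}
\end{equation*}
on $[0,\tilde\xi\wedge T]$. To upgrade this, let $v^{k}_\varepsilon$ denote the solutions with data $J_\varepsilon u_0^k$. On the common interval these satisfy:
\begin{enumerate}
\item $\|v_\varepsilon^k\|_{H^{s+1}}\lesssim\varepsilon^{-1}$;
\item $\|v^k_\varepsilon-v^k\|_{H^s}\to0$ as $\varepsilon\to0$ uniformly in $k$. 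This uses $u_0^k\to u_0^\infty$ in $H^s$, which gives equicontinuity of the high-frequency tails, $\sup_k\|(I-J_\varepsilon)u_0^k\|_{H^s}\to0$.
\end{enumerate}
Then split
\begin{equation*}
v^k-v^\infty=(v^k-v^k_\varepsilon)+(v^k_\varepsilon-v^\infty_\varepsilon)+(v^\infty_\varepsilon-v^\infty).
\end{equation*}
For the middle term, the $H^s$ energy estimate of the difference uses the $H^{s+1}$ bound of $v_\varepsilon$ paired with the $H^{s-1}$ smallness. The bad term is controlled by
\begin{equation*}
\|v^k_\varepsilon-v^\infty_\varepsilon\|_{H^{s-1}}\|v^\infty_\varepsilon\|_{H^{s+1}}\lesssim\varepsilon^{-1}\|J_\varepsilon(u_0^k-u_0^\infty)\|_{H^{s-1}}.
\end{equation*}
For fixed $\varepsilon$ this tends to $0$ as $k\to\infty$. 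Gronwall then gives $\limsup_k\sup_{t}\|v^k-v^\infty\|_{H^s}\le o_\varepsilon(1)$, and letting $\varepsilon\to0$ yields \eqref{continuity H^s}.

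The delicate points are the uniformity in $k$ of the tail estimate in item 2, and the commutator bookkeeping in the difference estimate. I will follow the standard deterministic Bona--Smith proof for CH, for instance as in \cite{BCD2011,CH9}, noting that the bounded factor $\eta$ changes only constants. Finally, since all constants depend on $\omega$ only through $\eta^*$ on $[0,T]$, the conclusion holds for a.e. $\omega$.
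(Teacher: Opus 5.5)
Your proposal is correct. Its skeleton matches the paper's: both pass pathwise to $\tilde u=\eta^{-1}u$, which turns \eqref{CH u} into a deterministic CH equation with the continuous coefficient $\eta(\omega,t)$. Both then first prove an $H^{s-1}$ Lipschitz bound for the difference of two solutions via transport estimates. The routes differ at the two steps where the paper is terse.

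\begin{itemize}
\item \textbf{Common existence time.} The paper sets $\tau_n=\inf_k\tau_n^k$ and simply asserts $\mathbb{P}(\tau_n>0)=1$ ``from uniform boundedness'' of the data. You prove this from the $H^s$ energy inequality, which gives an explicit $\tilde\xi\sim(C\eta^*R)^{-1}$. You also only need $\eta^*=\sup_{[0,T]}\eta$, rather than the global supremum $A(\omega)=\sup_{t\ge0}\eta(\omega,t)$ the paper uses.
\item \textbf{Upgrade from $H^{s-1}$ to $H^s$ convergence.} The paper cites Kato's abstract theory for quasilinear evolution equations as a black box, with the estimates of Lemma \ref{P estimate} playing the role of its Lipschitz hypotheses. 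You carry out the Bona--Smith regularization by hand.
\end{itemize}

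The paper's route is shorter. Yours is self-contained and shows exactly where $s>3/2$ enters: the algebra property of $H^{s-1}$, and $H^{s-1}\hookrightarrow L^\infty$ in the bad term. It also traces the uniformity in $k$ to the compactness of $\{u_0^k\}$ in $H^s$.

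One point needs care when you write out item 2. The bound $\|v^k_\varepsilon\|_{H^{s+1}}\lesssim\varepsilon^{-1}$ multiplied by a merely $O(\varepsilon)$ bound on $\|(I-J_\varepsilon)u_0^k\|_{H^{s-1}}$ is $O(1)$, not small. So one of the two factors must be improved to little-$o$, uniformly in $k$. Either use $\|(I-J_\varepsilon)u_0^k\|_{H^{s-1}}=o(\varepsilon)$, which holds e.g.\ if the symbol of $J_\varepsilon$ equals $1$ near the origin. Or use $\|J_\varepsilon u_0^k\|_{H^{s+1}}=o(\varepsilon^{-1})$, which holds for any Schwartz mollifier. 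Both follow from the same compactness of $\{u_0^k\}$ in $H^s$ that you already identified.
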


\begin{proof}
	For any $u_0,v_0\in H^s$, there exists two maximal pathwise solutions $(u,\{\tau_n^u\}_{n\in\mathbb{N}},\xi^u)$ and $(v,\{\tau_n^v\}_{n\in\mathbb{N}},\xi^v)$ to \eqref{CH u} with initial data $u_0$ and $v_0$, respectively. Define the stopping time
	$$\tau^w_n=\inf\{t>0:\max(\|u\|_{H^s},\|v\|_{H^s})> n\},$$
	we can easily deduce that for $n$ large enough $\mathbb{P}(\tau_n^w)>0$ and $\tau_n^w\le \tau^u\wedge\tau^v$. 
	
	Consider the Girsanov-type transform $\tilde{u}=\eta^{-1}(\omega,t)\,u$ and $\tilde{v}=\eta^{-1}(\omega,t)\,v$ with $\eta(\omega,t)=e^{\beta W(t)-\frac{\beta^2} 2 t}$. It is easy to obtain that
	$$
	\left\{
	\begin{aligned}
		&\partial_t\tilde{u}+\eta\,\tilde{u}\partial_x\tilde{u}+\eta\,\partial_x P[\tilde{u}]=0,\\
		&\tilde{u}(\omega,0,x)=u_0(x),
	\end{aligned}
	\right.
	\quad 
	\left\{
	\begin{aligned}
		&\partial_t\tilde{v}+\eta\,\tilde{v}\partial_x\tilde{v}+\eta\,\partial_x P[\tilde{v}]=0,\\
		&\tilde{v}(\omega,0,x)=v_0(x),
	\end{aligned}
	\right.
	$$
	and $\tilde{u},\tilde{v}\in C([0,\xi^u\wedge\xi^v);H^s)\cap C^1([0,\xi^u\wedge\xi^v);H^{s-1})$, $\mathbb{P}$-a.s.
	
	Denote $\tilde{w}=\tilde{u}-\tilde{v}=\eta^{-1}(u-v)$, $\tilde{h}=\tilde{u}+\tilde{v}=\eta^{-1}(u+v)$, we can see
	$$
	\left\{
	\begin{aligned}
		&\partial_t\tilde{w}+u\,\partial_x\tilde{w}+\tilde{w}\,\partial_x v+\eta\,\partial_x P(\tilde{w},\tilde{h})=0,\\
		&\tilde{w}(\omega,0,x)=u_0(x)-v_0(x),
	\end{aligned}
	\right.
	$$
	where $P(a,b)=K\ast (a\cdot b+\frac 1 2 a_x\cdot b_x)$. According to Lemma \ref{P estimate}, we have
	$$\|\tilde{w}\,\partial_xv\|_{H^{s-1}}\le C\|\tilde{w}\|_{H^{s-1}}\|v\|_{H^s}\quad{\rm and}\quad\|\eta\,\partial_x P(\tilde{w},\tilde{h})\|_{H^{s-1}}\le C\|\tilde{w}\|_{H^{s-1}}(\|u\|_{H^s}+\|v\|_{H^s}).$$
	Employing the energy estimates of the standard deterministic transport equation (cf. \cite{BCD2011}),w e obtain that for any $t\in[0,\tau_n^w\wedge T]$,
	$$\|\tilde{w}(\omega,t)\|_{H^{s-1}}\le\|u_0-v_0\|_{H^{s-1}}\exp\left(C\int_0^{\tau_n^w\wedge T}\|u(t)\|_{H^s}+\|v(t)\|_{H^s}\,{\rm d}t\right),$$
	thus,
	$$\|w(\omega,t)\|_{H^{s-1}}\le\eta(\omega,t)\,\|u_0-v_0\|_{H^{s-1}}\exp\left(C\int_0^{\tau_n^w\wedge T}\|u(t)\|_{H^s}+\|v(t)\|_{H^s}\,{\rm d}t\right).$$
	
	For fixed $\omega\in\Omega$, there exists $0<A(\omega)=\sup_{t>0}\eta(\omega,t)<\infty$, then for $t\in[0,\tau_n^w\wedge T]$,
	\begin{equation}\label{continuous s-1}
		\|u(\omega,t)-v(\omega,t)\|_{H^{s-1}}\le C\left(\tau_n^w\wedge T,n,A(\omega),\|u_0\|_{H^s},\|v_0\|_{H^s}\right)\|u_0-v_0\|_{H^{s-1}}.
	\end{equation}
	Therefore, we can conclude that for a.e. $\omega\in\Omega$, $t\in[0,\tau_n^w]$, $u(\omega,t)\rightarrow v(\omega,t)$ in $H^{s-1}$ as $u_0\rightarrow v_0$ in $H^{s-1}$.
	
	Denote $\bar{\mathbb{N}}=\mathbb{N}\cup\{\infty\}$, for each $k\in\bar{\mathbb{N}}$, consider the Girsanov-type transform $\tilde{u}^k$ of the maximal solutions $(u^k,\{\tau_n^k\}_{n\in\mathbb{N}},\xi^k)$ 
	\begin{equation}\label{tilde uk equation}
		\left\{
		\begin{aligned}
			&\partial_t\tilde{u}^k+\eta\,\tilde{u}^k\partial_x\tilde{u}^k+\eta\,\partial_x P[\tilde{u}^k]=0,\\
			&\tilde{u}^k(\omega,0,x)=u_0^k(x),
		\end{aligned}
		\right.
	\end{equation}
	and $\tilde{u}^k\in C([0,\xi^k);H^s)\cap C^1([0,\xi^k);H^{s-1})$, $\mathbb{P}$-a.s. Without loss of generality, define $\tau_n^k$ precisely as the stopping time:
	$$\tau_n^k:=\inf\left\{t>0:\|u^k(t)\|_{H^s}>n\right\}.$$
	And define the stopping time $\tau_n$ by
	$$\tau_n:=\inf\left\{t>0:\sup_{k\in\bar{\mathbb{N}}}\|u^k(t)\|_{H^s}>n\right\}=\inf\{\tau_n^k,\ k\in\bar{\mathbb{N}}\}.$$
	Since $\|u_0^k-u_0^\infty\|_{H^s}\rightarrow 0$, the sequence $\{u_0^k\}$ is uniformly bounded in $H^s$, i.e., $R_0:=\sup_{k\in\bar{\mathbb{N}}}\|u_0^k\|_{H^s}<\infty$. From this uniform boundedness, one can conclude that $\mathbb{P}(\tau_n>0)=1$ for sufficiently large $n$, and $0<\tau_n\le\inf_{k\in\bar{\mathbb{N}}}\tau_n^k\le\inf_{k\in\bar{\mathbb{N}}}\xi^k$.
	
	According to \eqref{continuous s-1}, $\eta\,\tilde{u}^k=u^k\rightarrow u^\infty=\eta\,\tilde{u}^\infty$ in $C([0,\tau_n];H^{s-1})$ for each $k\in\mathbb{N}$, and for all $t\in [0,\tau_n]$,
	$$\eta\,\|\partial_x P[\tilde{u}^k]-\eta\,\partial_x P[\tilde{u}^k]\|_{H^{1/2}}\le C_s\,\|\tilde{u}^k-\tilde{u}^\infty\|_{H^{1/2}}\,(\|u^k\|_{H^s}+\|u^\infty\|_{H^s})\le C(s,n)\,\|\tilde{u}^k-\tilde{u}^\infty\|_{H^{1/2}},$$
	$$\eta\,\|\partial_x P[\tilde{u}^k]-\eta\,\partial_x P[\tilde{u}^k]\|_{H^s}\le C_s\,\|\tilde{u}^k-\tilde{u}^\infty\|_{H^s}\,(\|u^k\|_{H^s}+\|u^\infty\|_{H^s})\le C(s,n)\,\|\tilde{u}^k-\tilde{u}^\infty\|_{H^s},$$
	by Kato's local wellposedness result for deterministic quasilinear evolution equations (see \cite{Danchin2001,Himonas2001,Kato1975,Li2000,RB2001}), we can deduce that
	$$\lim_{k\rightarrow\infty}\sup_{t\in[0,\tau_n]}\|\tilde{u}^k(\omega,t)-\tilde{u}^\infty(\omega,t)\|_{H^s}=0,$$
	since $\eta(\omega,t)\le A(\omega)$ for all $t\ge 0$, we obtain that 
	$$\lim_{n\rightarrow\infty}\sup_{t\in[0,\tau_n]}\|u^k(\omega,t)-u^\infty(\omega,t)\|_{H^s}=0.$$
	
	Finally, for sufficiently large $n$, $\tau_n>0$, $\tau_n\le\tau_{n+1}$, and $\tau_n\le\xi$. Applying the monotone convergence theorem for bounded monotone sequences, $\tilde{\xi}=\lim_{n\rightarrow}\tau_n>0$. For any $T>0$ and $0\le t<\tilde{\xi}\wedge T$, there exist $n_t\in\mathbb{N}$ such that $\tau_{n_t-1}<t\le\tau_{n_t}$. This concludes the proof of the theorem.
\end{proof}

\begin{coro}\label{continuity Es}
	Let $s>3$, $\mathcal{S}=(\Omega,\mathcal{F},\mathbb{P},\{\mathcal{F}_t\}_{t\ge0},W)$ be a fixed stochastic basis, and $E_s$ defined by \eqref{Es}. For each $u_0\in E_s$, there exists a unique global pathwise solution $u\in C([0,\infty);E_s)$ to\eqref{CH u} with initial data $u_0$, $\mathbb{P}$-a.s. Moreover, for any $T>0$, the solution map depends almost surely continuously on the initial data in $E_s$ in the sense of the $H^s$-norm, i.e., if $u_0^n\rightarrow u_0^\infty$ in $E_s$, then
	$$\sup_{t\in[0,T]}\|u^n(t)-u^\infty(t)\|_{H^s}\rightarrow0,\quad\mathbb{P}-{\rm a.s.}$$
\end{coro}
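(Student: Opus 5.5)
The plan has three parts: global existence with values in $E_s$, a uniform a.s. $H^s$ bound on $[0,T]$ along the sequence of initial data, and then the local continuity theorem applied with stopping times that can no longer fire before $T$.

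\textbf{Step 1: global existence and invariance of $E_s$.} For $u_0\in E_s$ we invoke Theorem \ref{global}, which provides a unique global pathwise solution for a.e. $\omega$. Theorem \ref{global} is stated for strict sign conditions, while $E_s$ only requires $(1-\partial_x^2)u_0\ge0$ or $\le0$. The natural route is the Girsanov-type transform $\tilde u=\eta^{-1}u$. It solves the deterministic-coefficient equation
$$\tilde m_t+\eta(\tilde u\tilde m_x+2\tilde u_x\tilde m)=0,\qquad \tilde m=(1-\partial_x^2)\tilde u.$$
Along the characteristics $q_t=\eta\tilde u(q)$ we get
$$\tilde m(t,q(t,x))\,q_x^2(t,x)=m_0(x).$$
Hence the sign of $m$ is preserved pointwise, including the case where $m_0$ vanishes, and $u(t)\in E_s$ for all $t$. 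The nonstrict sign condition is already enough for the blow-up criterion that gives globality: $\|u_x\|_{L^\infty}$ stays bounded, as in \eqref{u W^1,infty}. Continuity in time in $H^s$ comes from the definition of pathwise solution, so $u\in C([0,\infty);E_s)$ almost surely.

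\textbf{Step 2: a uniform bound on $[0,T]$.} Let $u_0^n\to u_0^\infty$ in $E_s$, so $R_0:=\sup_n\|u_0^n\|_{H^s}<\infty$. Fix $\omega$ in the full-measure set where every $u^n$ is global and $A_T(\omega):=\sup_{t\le T}\eta(\omega,t)<\infty$; this set is a countable intersection, hence still of full measure. By \eqref{u H^s},
$$\sup_{n\in\bar{\mathbb N}}\sup_{t\in[0,T]}\|u^n(t)\|_{H^s}\le C_sA_T(\omega)R_0\exp\left(C_sTA_T(\omega)R_0\right)=:N(\omega)<\infty .$$

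\textbf{Step 3: apply the continuity theorem.} With the stopping times $\tau_n=\inf\{t:\sup_k\|u^k(t)\|_{H^s}>n\}$ from the preceding theorem, take any $n>N(\omega)$; then $\tau_n(\omega)\ge T$. The preceding theorem gives convergence of $u^k$ to $u^\infty$ uniformly on $[0,\tau_n]$ in $H^s$. Its proof has two ingredients. First, the $H^{s-1}$ Lipschitz bound \eqref{continuous s-1}, whose constant depends only on $T$, $n$, $A(\omega)$ and $R_0$. Second, the Kato-type argument (frequency truncation of initial data, a Bona–Smith style argument) upgrading this to $H^s$ convergence for the transformed $\tilde u^k$. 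Multiplying back by $\eta\le A_T(\omega)$ then yields
$$\sup_{t\in[0,T]}\|u^k(t)-u^\infty(t)\|_{H^s}\to0 .$$

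\textbf{Main obstacle.} The hardest step is justifying that the $H^s$ upgrade holds on the whole interval $[0,T]$, not just on a short local interval. The deterministic Kato continuity result is stated on intervals where the solutions are uniformly bounded, so it should apply directly with the uniform bound $N(\omega)$. If needed, I would iterate it over finitely many subintervals of length depending only on $N(\omega)$ and $A_T(\omega)$. A secondary point is the degenerate sign case in Step 1; I would handle it by the characteristic identity above rather than by quoting Theorem \ref{global} verbatim.
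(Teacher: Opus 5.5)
Your proposal is correct and follows essentially the paper's argument: the pathwise bound on $\sup_{n}\sup_{t\le T}\|u^n(t)\|_{H^s}$ from \eqref{u H^s} forces $\tau_n\ge T$ for large $n$, and the preceding continuity theorem then gives uniform $H^s$ convergence on $[0,T]$. Your $R_0=\sup_n\|u_0^n\|_{H^s}$ in front of the exponential is in fact the right quantity, where the paper writes the $H^1$ bound $M$, and your Step 1 is more careful than the paper, which quotes Theorem \ref{global} despite its strict sign hypothesis, since you check via $\tilde m(t,q(t,x))\,q_x^2(t,x)=m_0(x)$ that the non-strict sign condition defining $E_s$ is preserved and still gives global existence.
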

\begin{proof}
	The embedding $H^{s-2}\hookrightarrow L^\infty$ readily implies that $E_s$ is Polish space.
	
	Denote $M:=sup_{n\in\bar{\mathbb{N}}}\|u_0^n\|_{H^1}<\infty$, $A(\omega)=\sup_{t\ge 0}\eta(\omega,t)<\infty$, according to \eqref{u H^s}, we have for a.e. $\omega\in\Omega$, any $T>0$,
	$$\|u^n(\omega,t)\|_{H^s}\le C_sA(\omega)M\exp(C_sA(\omega)MT).$$
	Therefore, $\lim_{n\rightarrow\infty} \tau_n\wedge T=T$. The conclusion of the corollary follows directly from Theorem \ref{continuity H^s}.
\end{proof}

With the global pathwise well-posedness of the CH equation and the continuous dependence on initial data established, the transition probability family $\{P_t\}_{t\ge0}$ induces by these global solutions constitute a Markovian Feller semigroup on $E_s$.

\begin{coro}
	Let $s>3$, $\mathcal{S}=(\Omega,\mathcal{F},\mathbb{P},\{\mathcal{F}_t\}_{t\ge0},W)$ be a fixed stochastic basis, and $E_s$ defined by \eqref{Es}. The Markov transition probability measure generated by the CH equation \eqref{CH u}, defined by
	\begin{equation}\label{transition measure}
		P_t(u_0,A)=\mathbb{P}(u(t,x)\in A|u(0)=u_0),\quad\forall T\ge0,A\in\mathcal{B}(E_s),
	\end{equation}
	and the corresponding Markov semigroup defined as
	\begin{equation}\label{semigroup}
		P_t \varphi(u_0) := \mathbb{E}\bigl[ \varphi(u(t; u_0)) |u(0)=u_0\bigr]=\int_\Omega \varphi(y) P_t(u_0,{\rm d}y), \quad t \ge 0,\; u_0 \in E_s,\; \varphi \in C_b(E_s).
	\end{equation}
	
	Then the semigroup $P_t$ is a Markovian Feller semigroup on $E_s$.
\end{coro}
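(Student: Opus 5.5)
We check, in order, that $P_t$ is a Markovian transition function on $E_s$, that it is stochastically continuous, and that it has the Feller property. Throughout, $u(t;u_0)$ denotes the unique global pathwise solution from Corollary \ref{continuity Es}, which lies in $C([0,\infty);E_s)$ almost surely.

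\textbf{Transition function.} For fixed $t$, the map $\omega\mapsto u(t;u_0)(\omega)$ is an $\mathcal{F}_t$-measurable $E_s$-valued random variable. Hence $P_t(u_0,\cdot)$ is its law, which is a probability measure on $\mathcal{B}(E_s)$. Since the initial datum is deterministic, $P_0(u_0,\Gamma)=\chi_\Gamma(u_0)$.

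\textbf{Measurability in $u_0$.} Fix $\varphi\in C_b(E_s)$. By Corollary \ref{continuity Es}, $u_0\mapsto u(t;u_0)(\omega)$ is continuous for a.e. $\omega$. Dominated convergence then shows $u_0\mapsto \mathbb{E}\varphi(u(t;u_0))$ is continuous. A monotone class argument extends this to Borel measurability of $u_0\mapsto P_t(u_0,\Gamma)$ for every $\Gamma\in\mathcal{B}(E_s)$.

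\textbf{Chapman--Kolmogorov.} The relation $P_{t+s}(x,\Gamma)=\int P_s(y,\Gamma)P_t(x,{\rm d}y)$ follows from the Markov property of the solution. This is the step I expect to be the main obstacle, and I would proceed as follows.
\begin{enumerate}
\item Use pathwise uniqueness (Theorem \ref{global}) to see that $u(t+s;u_0)$ coincides with the solution started at time $t$ from $u(t;u_0)$ and driven by the shifted Brownian motion $W(t+\cdot)-W(t)$. This noise is independent of $\mathcal{F}_t$.
\item Approximate the random initial value $u(t;u_0)$ by simple $\mathcal{F}_t$-measurable random variables taking finitely many values in $E_s$.
\item On each level set of the approximation, the restarted solution is the deterministic-data solution driven by the shifted noise. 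This gives
$$\mathbb{E}[\varphi(u(t+s;u_0))\,|\,\mathcal{F}_t]=(P_s\varphi)(u(t;u_0))$$
for the approximation.
\item Pass to the limit using the almost-sure continuous dependence from Corollary \ref{continuity Es} together with dominated convergence.
\item Take expectations to get the semigroup identity for $\varphi\in C_b(E_s)$, then extend to indicator functions by a monotone class argument.
\end{enumerate}
In step 3, the law of the restarted solution depends only on the law of the noise, because the solution is a measurable functional of the Brownian path via the Girsanov-type transform.

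\textbf{Feller property.} The continuity argument used for measurability already gives more: if $u_0^n\to u_0$ in $E_s$, then $\varphi(u(t;u_0^n))\to\varphi(u(t;u_0))$ almost surely. By dominated convergence, $P_t\varphi(u_0^n)\to P_t\varphi(u_0)$. Boundedness of $P_t\varphi$ is immediate, so $P_t\varphi\in C_b(E_s)$.

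\textbf{Stochastic continuity.} Since $u\in C([0,\infty);H^s)$ almost surely with $u(0)=u_0$, we have $\|u(t)-u_0\|_{H^s}\to0$ almost surely as $t\to0$. Therefore
$$P_t(u_0,B(u_0,\delta))=\mathbb{P}\bigl(\|u(t)-u_0\|_{H^s}<\delta\bigr)\to1$$
by dominated convergence.
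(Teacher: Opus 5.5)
Your proposal is correct and follows the paper's route. The Feller property comes exactly as in the paper, from the almost-sure continuous dependence of Corollary~\ref{continuity Es} combined with dominated convergence. Your restart-and-approximation argument for Chapman--Kolmogorov and your path-continuity argument for stochastic continuity supply the details the paper dismisses with ``uniqueness and continuity immediately imply''. One point should be stated explicitly. In step 4, passing to the limit along the simple $\mathcal{F}_t$-measurable approximations of $u(t;u_0)$ needs $x\mapsto u(\cdot;x)(\omega)$ to be continuous off a single null set, not merely off a sequence-dependent one as in the literal statement of Corollary~\ref{continuity Es}. This stronger form holds because the Girsanov-transformed equation is a deterministic PDE solved $\omega$ by $\omega$.
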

\begin{proof}
	According to Corollary \ref{continuity Es}, if $\{u_0^n\}_{n\in\mathbb{N}},u_0\in E_s$ and $u_0^n\rightarrow u_0\in H^s$, then for any $T>0$,
	$$\sup_{t\in[0,T]}\|u^n(t)-u(t)\|_{H^s}\rightarrow0,\quad\mathbb{P}-{\rm a.s.},$$
	where $u^n$ and $u$ are unique global pathwise solutions to \eqref{CH u} with initial data $u_0^n$ and $u_0$, respectively.
	For any $\varphi\in C_b(E_s)$, by the continuity and boundedness of $\varphi$, the Lebesgue dominated convergence theorem yields
	$$
	\mathbb{E}\left[ \varphi(u^n(t)) \right] \to \mathbb{E}\left[ \varphi(u(t)) \right],
	$$
	that is, $P_t \varphi(u_0^n) \to P_t \varphi(u_0)$. The uniqueness and continuity immediately imply that $P_t$ is a Markovian Feller semigroup, we omit the detail for brevity.
\end{proof}

\section{The existence of invariant measure}
\quad First, we introduce a lemma that will be very important in the subsequent proof.
\begin{lemm}\label{W-t/2}\cite{Fitz2020}
	$$\sup_{t\ge 0} (W(t)-\mu\,t)\overset{d}{=}e(\mu),$$
	where $W(t)$ is a Brownian motion, $\mu>0$ a drift and $e(\mu)$ is a random variable which has
	the exponential distribution with rate $2\mu$.
\end{lemm}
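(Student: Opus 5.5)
We will compute the tail $\mathbb{P}\bigl(\sup_{t\ge0}(W(t)-\mu t)\ge a\bigr)$ for every $a>0$ and show that it equals $e^{-2\mu a}$. The tool is the exponential martingale combined with optional stopping at the first hitting time of level $a$.

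Set $X(t)=W(t)-\mu t$ and $M(t)=\exp\bigl(2\mu X(t)\bigr)=\exp\bigl(2\mu W(t)-\tfrac{(2\mu)^2}{2}t\bigr)$. The exponent $2\mu$ is chosen exactly so that the drift term $-2\mu^2t$ matches the Itô correction. Hence $M$ is the standard exponential martingale with $M(0)=1$.

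For $a>0$ let $\tau_a=\inf\{t\ge0: X(t)=a\}$, with $\inf\emptyset=\infty$. Since $X$ has continuous paths and $X(0)=0<a$, we have $\{\sup_{t\ge0}X(t)\ge a\}=\{\tau_a<\infty\}$. By optional stopping at the bounded time $\tau_a\wedge t$,
$$\mathbb{E}\bigl[M(\tau_a\wedge t)\bigr]=1\quad\text{for all } t\ge0,$$
and $0\le M(\tau_a\wedge t)\le e^{2\mu a}$.

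The main step is passing to the limit $t\to\infty$, which requires identifying the limit on the event $\{\tau_a=\infty\}$. On $\{\tau_a<\infty\}$ we have $M(\tau_a\wedge t)\to e^{2\mu a}$. On $\{\tau_a=\infty\}$ we use the strong law of large numbers for Brownian motion, $W(t)/t\to0$ almost surely. This gives $X(t)\to-\infty$, so $M(t)\to0$ almost surely.

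Bounded convergence then yields
$$1=e^{2\mu a}\,\mathbb{P}(\tau_a<\infty),$$
that is,
$$\mathbb{P}\Bigl(\sup_{t\ge0}X(t)\ge a\Bigr)=e^{-2\mu a}\quad\text{for all } a>0.$$

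To conclude, note that $\sup_{t\ge0}X(t)\ge X(0)=0$. This supremum is almost surely finite, again because $X(t)\to-\infty$. Its survival function is therefore $e^{-2\mu a}$ for $a>0$, which is exactly the exponential law with rate $2\mu$.
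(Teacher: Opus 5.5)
The paper gives no proof of this lemma; it is quoted from the literature. What the paper actually uses is only the tail formula $\mathbb{P}\bigl(\sup_{t\ge0}(W(t)-\mu t)>m\bigr)=e^{-2\mu m}$, with $\mu=\beta/2$ and, after the strong Markov restart at time $M_2$, with $\mu=\beta/4$.

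Your argument is correct and is the standard self-contained proof. The process $e^{2\mu X(t)}$ is exactly the exponential martingale with parameter $2\mu$. Optional stopping at $\tau_a\wedge t$ with the uniform bound $e^{2\mu a}$, followed by bounded convergence using $W(t)/t\to0$, gives $\mathbb{P}(\tau_a<\infty)=e^{-2\mu a}$. Compared with citing the result, this costs a few lines and needs only optional stopping and the strong law of large numbers for Brownian motion.

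A variant avoids even the strong law. Stop the martingale $\exp\bigl(\theta W(t)-\theta^2t/2\bigr)$ with $\theta=\mu+\sqrt{\mu^2+2\lambda}$ and $\lambda>0$. The factor $e^{-\lambda t}$ then kills the contribution of $\{\tau_a=\infty\}$, which gives $\mathbb{E}\bigl[e^{-\lambda\tau_a};\tau_a<\infty\bigr]=e^{-a(\mu+\sqrt{\mu^2+2\lambda})}$. Letting $\lambda\downarrow0$ recovers $\mathbb{P}(\tau_a<\infty)=e^{-2\mu a}$.

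One point of rigor needs fixing. The identity $\{\sup_{t\ge0}X(t)\ge a\}=\{\tau_a<\infty\}$ does not follow from path continuity and $X(0)<a$ alone, since a continuous path may increase to $a$ without ever reaching it. The identity does hold almost surely, because $X(t)\to-\infty$. You only establish that fact afterwards, so it should be stated first.

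Alternatively, use the sure inclusions $\{\sup_{t\ge0}X(t)>a\}\subset\{\tau_a<\infty\}\subset\{\sup_{t\ge0}X(t)\ge a\}$ together with the continuity of $a\mapsto e^{-2\mu a}$. This pins down both $\mathbb{P}(\sup_{t\ge0}X(t)\ge a)$ and $\mathbb{P}(\sup_{t\ge0}X(t)>a)$. In particular it yields directly the strict-inequality tail in the form the paper uses.
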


\begin{lemm}
	Let $s>3$ and $\mathcal{S}=(\Omega,\mathcal{F},\mathbb{P},\{\mathcal{F}_t\}_{t\ge0},W)$ be a fixed stochastic basis. Let $u_0\in E_s$, denote the time-averaged measure, 
	\begin{equation}\label{time average measure}
		\mu_T(A)=\frac 1 T\int_0^T P_t(u_0,A)\,{\rm d}t,\quad\forall T\ge0,A\in\mathcal{B}(E_s),
	\end{equation}
	where $P_t(u_0,A)$ is the transition probability measure defined as in \eqref{transition measure}. 
	
	Then for any $\delta>0$, $u_0\in H^{s+\delta}$, the family $\{\mu_T\}_{T>0}$ is tight.
\end{lemm}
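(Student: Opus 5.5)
Tightness of $\{\mu_T\}_{T>0}$ will come from compact sublevel sets of a stronger norm. For $R>0$ let
$$K_R:=\{v\in E_s:\ \|v\|_{H^{s+\delta}}\le R\}.$$
Since $\mathbb{S}$ is compact, $H^{s+\delta}\hookrightarrow H^s$ is compact. So $K_R$ is relatively compact in $H^s$. It is also closed in $E_s$: the $H^{s+\delta}$-ball is weakly closed, hence closed in $H^s$, and the sign condition defining $E_s$ is preserved under $H^s$-limits because $H^{s-2}\hookrightarrow L^\infty$ (this is the closedness used in Corollary \ref{continuity Es}). Thus $K_R$ is compact in $E_s$. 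It then suffices to show
$$\sup_{t\ge0}\mathbb{P}\bigl(\|u(t)\|_{H^{s+\delta}}>R\bigr)\to0 \quad (R\to\infty),$$
since then $\mu_T(E_s\setminus K_R)=\frac1T\int_0^T\mathbb{P}(u(t)\notin K_R)\,{\rm d}t$ is small uniformly in $T$.

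\textbf{Step 1: propagate regularity.} Because $u_0\in H^{s+\delta}$ with $s+\delta>3$, Theorem \ref{global} applies with $s$ replaced by $s+\delta$. Uniqueness makes the $H^{s+\delta}$ solution coincide with the $H^s$ one, and \eqref{u H^s} gives, for all $t\le T$,
$$\|u(t)\|_{H^{s+\delta}}\le C\,\eta(\omega,t)\,\|u_0\|_{H^{s+\delta}}\exp\Bigl(C\|u_0\|_{H^1}\int_0^t\eta(\omega,r)\,{\rm d}r\Bigr).$$
The estimate is really an energy estimate on $[0,t]$, so the time integral may be taken up to $t$.

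\textbf{Step 2: control the noise factors uniformly in time.} Write
$$\eta(t)=\exp\bigl(\beta(W(t)-\tfrac{\beta}{2}t)\bigr).$$
By Lemma \ref{W-t/2} applied to the Brownian motion $\operatorname{sign}(\beta)W$ with drift $\mu=|\beta|/2$, the random variable $S:=\sup_{t\ge0}\bigl(\beta W(t)-\tfrac{\beta^2}{2}t\bigr)$ is exponential with rate $|\beta|/|\beta|\cdot\ldots$, in any case a.s. finite with an explicit exponential tail. Hence $\sup_t\eta(t)\le e^{S}$ has a power-law tail, with no dependence on $t$. The harder factor is $\int_0^\infty\eta(r)\,{\rm d}r$. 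This is a Dufresne-type perpetuity, distributed as $2/(\beta^2\,\Gamma)$ with $\Gamma$ a Gamma variable, so it is a.s. finite. If I want to stay within the paper's tools, I would instead show a.s. finiteness directly: by the law of the iterated logarithm, $\beta W(r)-\tfrac{\beta^2}{2}r\le -\tfrac{\beta^2}{4}r$ for $r\ge r_0(\omega)$, so the integral converges. Combining Steps 1 and 2,
$$\sup_{t\ge0}\|u(t)\|_{H^{s+\delta}}\le Z(\omega):=C\|u_0\|_{H^{s+\delta}}\,e^{S}\exp\Bigl(C\|u_0\|_{H^1}\int_0^\infty\eta\Bigr),$$
where $Z$ is an a.s. finite random variable independent of $t$. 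Therefore $\sup_t\mathbb{P}(\|u(t)\|_{H^{s+\delta}}>R)\le\mathbb{P}(Z>R)\to0$, and Markov/Chebyshev is not even needed.

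\textbf{Main obstacle.} The delicate point is Step 1: checking that \eqref{u H^s} holds at level $s+\delta$ with the integral $\int_0^t$ rather than $\int_0^T$, and with constants independent of $T$. I would re-derive it from the Girsanov-transformed equation \eqref{tilde uk equation}. The standard transport-commutator estimate gives
$$\frac{{\rm d}}{{\rm d}t}\|\tilde u\|_{H^{s+\delta}}\le C\,\eta\,\|u\|_{W^{1,\infty}}\cdots\|\tilde u\|_{H^{s+\delta}}.$$
Then \eqref{u W^1,infty}, together with the conservation \eqref{u Girsanov H^1} (which bounds $\|\tilde u\|_{W^{1,\infty}}\lesssim\|u_0\|_{H^1}$), gives Gronwall with exponent $C\|u_0\|_{H^1}\int_0^t\eta$, uniformly in time.
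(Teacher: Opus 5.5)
Your proof is correct and follows the paper's argument. Both use the pathwise bound \eqref{u H^s} at level $s+\delta$, then a time-independent random majorant built from $\sup_t\eta$ (via Lemma \ref{W-t/2}) and the a.s.\ finite time after which $\beta W(t)-\frac{\beta^2}{2}t\le-\frac{\beta^2}{4}t$ (your $r_0(\omega)$ is the paper's $T(\omega)$), and finally compactness of $H^{s+\delta}$-balls in $E_s$. The differences are small simplifications on your side. You observe that a.s.\ finiteness of the majorant already gives $\mathbb{P}(Z>R)\to0$, so the paper's explicit tail computations for $A(\omega)$ and $T(\omega)$ are unnecessary, and you verify that $K_R$ is closed in $E_s$, which the paper leaves implicit. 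Your ``main obstacle'' is also harmless, since $\int_0^T\eta\le\int_0^\infty\eta$ makes the choice between $\int_0^t$ and $\int_0^T$ irrelevant.
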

\begin{proof}
	With out loss of generality, we may assume $\|u_0\|_{H^1}>0$. For fixed $\omega\in\Omega$, according to\eqref{u H^s}, for any $T>0$ and $t\in[0,T]$, we have
	$$\|u(t)\|_{H^{s+\delta}}\le C\eta(\omega,t)\,\|u_0\|_{H^{s+\delta}}\,\exp\left(\int_0^t\eta(\omega,s)\,\|u_0\|_{H^1}{\rm d}s\right).$$
	
	Denote $A(\omega)=\sup_{t>0}\eta(\omega,t)$,  $T(\omega)=\inf\{T:\beta W(t)-\frac{\beta^2} 2\, t\le-\frac{\beta^2} 4\, t,\ \forall t\ge T\}$, and $B_R=\{u\in E_s\bigg|\|u\|_{H^{s+\delta}}\le R\}$, then we have
	$$
		\begin{aligned}
			\mu_T(B_R^c)&=\frac 1 T\int_0^T\mathbb{P}\left(\|u\|_{H^{s+\delta}}>R|u(0)=u_0\right)\,{\rm d}t\\
			&\le \frac 1 T\int_0^T\mathbb{P}\left(C\eta(\omega,t)\|u_0\|_{H^{s+\delta}}\exp\left(\int_0^t \eta(\omega,s)\|u_0\|_{H^1}\,{\rm d}s\right)>R\right)\,{\rm d}t\\
			&\le \frac 1 T \int_0^T\mathbb{P}\left( CA(\omega)\,\|u_0\|_{H^{s+\delta}}\,\exp\left(\|u_0\|_{H^1}(T(\omega)\cdot A(\omega)+1)\right)>R \right)\,{\rm d}t.
		\end{aligned}
	$$
	And we have
	$$
	\begin{aligned}
		&\mathbb{P}\left(A(\omega)\exp\left(\|u_0\|_{H^1}(T(\omega)\cdot A(\omega)+1)\right)>\frac R {C\|u_0\|_{H^{s+\delta}}}\right)\\
		\le\ 
		&\mathbb{P}\left(A(\omega)>\left(\frac R {C\|u_0\|_{H^{s+\delta}}}\right)^{\frac 1 2}\right)+\mathbb{P}\left(T(\omega)\cdot A(\omega)>\frac 1 {2\|u_0\|_{H^1}} \log\frac{R}{C\|u_0\|_{H^{s+\delta}}}-1\right)\\
		\le\ 
		&\mathbb{P}\left(A(\omega)>\left(\frac R {C\|u_0\|_{H^{s+\delta}}}\right)^{\frac 1 2}\right)+\mathbb{P}\left(T(\omega)>\left[\frac 1 {2\|u_0\|_{H^1}} \log\frac{R}{C\|u_0\|_{H^{s+\delta}}}-1\right]^{\frac 1 2}\right)\\
		+&\mathbb{P}\left(A(\omega)>\left[\frac 1 {2\|u_0\|_{H^1}} \log\frac{R}{C\|u_0\|_{H^{s+\delta}}}-1\right]^{\frac 1 2}\right),
	\end{aligned}
	$$
	by taking $R$ sufficiently large, the above constants are well-defined. Without loss of generality, we may assume $\beta>0$, then for the drifted Brownian motion $\tilde{W}=\sup_{t\ge 0}\left(W(t)-\frac\beta 2\,t\right)$, according to Lemma \ref{W-t/2}, we have 
	$$\mathbb{P}(\tilde{W}>m)=e^{-\beta m},$$
	thus
	\begin{equation}\label{P(A) inequality1}
		\mathbb{P}\left(A(\omega)>\left(\frac R {C\|u_0\|_{H^{s+\delta}}}\right)^{1/2}\right)\le CR^{-1/2},
	\end{equation}
	and
	\begin{equation}\label{P(A) inequality2}
		\mathbb{P}\left(A(\omega)>\left[\frac 1 {2\|u_0\|_{H^1}} \log\frac{R}{C\|u_0\|_{H^{s+\delta}}}-1\right]^{\frac 1 2}\right)\le\left[\frac 1 {2\|u_0\|_{H^1}} \log\frac{R}{C\|u_0\|_{H^{s+\delta}}}-1\right]^{-1/2}.
	\end{equation}
	
	Consider $\mathbb{P}(T(\omega)>M_2)$, we have $\mathbb{P}(T(\omega)>M_2)=\mathbb{P}\left(\sup_{t\ge M_2}(W(t)-\beta t/4)>0\right)$. Define $s=t-M_2\ge 0$, $W(M_2)=x$, then the strong Markov property of Brownian motion yields $W(M_2+s)=x+\overline{W}(s)$, where $\overline{W}(s)$ is a Brownian motion. Applying Lemma \ref{W-t/2}, we obtain that
	$$\sup_{t\ge M_2}\left(W(t)-\beta t/4\right)=\sup_{s\ge 0}\left(x+\overline{W}(s)-\frac \beta 4(M_2+s)\right)=x-\frac{\beta M_2}{4}+\sup_{s\ge0}(\overline{W}(s)-\beta s/4),$$
	$$\begin{aligned}
		\mathbb{P}\left(T(\omega)>M_2\bigg| W(M_2)=x\right)
		&=\mathbb{P}\left(\sup_{s\ge0}\left(\beta\overline{W}(s)-\beta^2 s/4\right)>\beta^2 M_2/4-\beta x\right)\\
		&=
	\left\{
	\begin{array}{ll}
		1,&\beta M_2/4-x<0,\\
		e^{-\beta^2 M_2/8+\beta x/2},&\beta M_2/4-x\ge 0,
	\end{array}
	\right.
	.
	\end{aligned}$$
	and $W(M_2)\overset{d}{=}N(0,M_2)$.
	
	Hence, using the standard method for computing conditional expectations \cite{Feller1971,Grimmett1986}, we obtain
	$$
	\begin{aligned}
		\mathbb{P}(T(\omega)>M_2)
		&=\mathbb{E}[\mathds{1}_{\{T>M_2\}}]
		=\mathbb{E}\left[\mathbb{E}[\mathds{1}_{\{T>M_2\}}\bigg|W(M_2)]\right]\\
		&=\int_{-\infty}^\infty\mathbb{P}\left(T(\omega)>M_2\bigg|W(M_2)=x \right)\frac 1 {(2\pi M_2)^{1/2}}e^{-x^2/(2M_2)}\,{\rm d}x\\
		&=\int_{\beta M_2/4}^\infty \frac 1 {(2\pi M_2)^{1/2}}e^{-x^2/(2M_2)}\,{\rm d}x+\int_{-\infty}^{\beta M_2/4} e^{-\beta^2 M_2/8+\beta x/2}e^{-x^2/(2M_2)}\,{\rm d}x\\
		&=1-\Phi\left(\frac {\beta\sqrt{M_2}} 4\right)+\int_{-\infty}^{\beta M_2/4}\frac 1 {(2\pi M_2)^{1/2}}e^{-(x-\beta M_2/2)^2/(2M_2)}\,{\rm d}x\\
		&=1-\Phi\left(\frac {\beta\sqrt{M_2}} 4\right)+\Phi\left(-\frac {\beta\sqrt{M_2}} 4\right)
		=2\left(1-\Phi\left(\frac {\beta\sqrt{M_2}} 4\right)\right),
	\end{aligned}
	$$
	where $\Phi\left(\frac{a}{\sqrt{b}}\right)$ is the probability that a normally distributed random variable with mean $0$ and variance $b$ is less than or equal to $a$. Similarly, we can deduce that
	\begin{equation}\label{P(T) inequality}
		\mathbb{P}\left(T(\omega)>\left[\frac 1 {2\|u_0\|_{H^1}} \log\frac{R}{C\|u_0\|_{H^{s+\delta}}}-1\right]^{\frac 1 2}\right)=2\left(1-\Phi\left(\frac \beta 4 \left[\frac 1 {2\|u_0\|_{H^1}} \log\frac{R}{C\|u_0\|_{H^{s+\delta}}}-1\right]^{\frac 1 4}\right) \right)
	\end{equation}
	
	Therefore, for fixed $\beta>0$, for any $\varepsilon>0$, combining \eqref{P(A) inequality1}, \eqref{P(A) inequality2} and \eqref{P(T) inequality}, choosing $R$ sufficiently large, we have
	$$\mathbb{P}\left(A(\omega)\exp\left(\|u_0\|_{H^1}(T(\omega)\cdot A(\omega)+1)\right)>\frac R {C\|u_0\|_{H^{s+\delta}}}\right)<\varepsilon,$$
	thus, we can obtain that
	\begin{equation}\label{mu tightness}
		\mu_T(B_R^c)<\frac 1 T\int_0^T\varepsilon\,{\rm d}t=\varepsilon.
	\end{equation}
	Besides, $B_R$ is a compact set of $E_s$ since the embedding from $H^{s+\delta}$ into $H^s$ is compact. Hence, the tightness of the family $\{\mu_T\}_{T>0}$ is proved.
\end{proof}

\begin{proof}[Proof of Theorem \ref{main}]
	The existence of invariant measure for equation \eqref{CH u} is established by Theorem \ref{KB theorem} and Lemma \ref{mu tightness}.
	
	Fixed $\delta>0$, for $u_0,v_0\in E_s$ satisfy $$(1-\partial_x^2)u_0>0,\quad (1-\partial_x^2)v_0<0,\quad u_0,v_0\in H^{s+\delta}$$
	and $u$ and $v$ are the unique global solutions to \eqref{CH u} with initial data $u_{0}$ and $v_0$, respectively. Then Lemma \ref{mu tightness} yields that the time-averaged measures from $u_0$ and $v_0$ converge correspondingly to $\mu^u$ and $\mu^v$.
	
	Besides, it is easily to obtain that for all $t\ge 0$, $(1-\partial_x^2)u>0$ and $(1-\partial_x^2)v<0$. Then for $K=\{u\in E_s:(1-\partial^2)u>0\}$, we have
	$$P_t(u_0,K)=1\quad{\rm and}\quad P_t(v_0,K)=0,\quad\forall\,t\ge0,$$
	i.e. $\mu^u(K)=1$ and $\mu^v(K)=0$. Hence, the invariant measure for equation \eqref{CH u} is not unique. In other words, equation \eqref{CH u} is not ergodic.
\end{proof}

\begin{rema}
	When $\beta=0$, the estimate for $T(\omega)$ in Lemma \ref{mu tightness} no longer holds. Consequently, the invariant measure for the deterministic CH equation cannot be obtained by this method.
\end{rema}

\smallskip
	\noindent\textbf{Acknowledgments}
This work was partially supported by the National Key R\&D Program of China(No. 2021YFA1002100) and the National Natural Science Foundation of China (No.12171493 and No. 12471233).
	
	\smallskip
	\noindent\textbf{Conflict of interest}
	
	The authors have no conflicts to disclose.
	
	\smallskip
	\noindent\textbf{Data Availability}
	
	The data that support the findings of this study are available from the corresponding author upon reasonable request.

	\phantomsection
	\addcontentsline{toc}{section}{\refname}
	\bibliographystyle{abbrv} 
	\bibliography{SCHref}
\end{document}